\newtheorem{theorem}{Theorem} %[section]
\newtheorem{rem}{\bf{Remark}}
\numberwithin{equation}{section}
\newtheorem{corollary}{Corollary}
\newtheorem{proposition}{Proposition}
\newtheorem{example}{Example}
\def\pn{\par\noindent}
\begin{document}
%------------------------------------------------------------------------------------%
%%Don not change any thing in this part

%------------------------------------------------------------------------------------%
\title{The n-th smallest term for any finite sequence of real numbers.}
%\title{The n-th largest term in a given finite sequence of real numbers}
\author{Josimar da Silva Rocha}

\thanks{{\scriptsize
\hskip -0.4 true cm MSC(2010): Primary: 00A05; Secondary: 06A75.
\newline Keywords: General Mathematics, Finite Sequences,  n-th least term.\\
}}
\maketitle

%------------------------------------------------------------------------------------%
%This part will be filled in by IJGT
\begin{center}
%Communicated by  Alireza Abdollahi
\end{center}
%------------------------------------------------------------------------------------%

\begin{abstract} 
In this paper we find the formula that gives the $n^{th}$ smallest term in a given finite sequence $\{x_{k}\}_{k=1}^{N}$ of real numbers.
\end{abstract}

\vskip 0.2 true cm

%------------------------------------------------------------------------------------%

%\pagestyle{myheadings}

\bigskip
\bigskip

%------------------------------------------------------------------------------------%
%------------------------------------------------------------------------------------%

 In the literature, we can find many algorithms to ordination, such that Quicksort, Shellsort, Buble sort, Heapsort, Merge Sort and others.  Any these algorithms  can be founded in \cite{josd}. 
 
It is easy to see that these algorithms can be changed to find the n-th smallest term in a finite sequence of real numbers. However, in the recent  literature, we hasn't found  a formula that gives us the n-th smallest term for any  finite sequence of real numbers.
 
The purpose of this paper will be  to define a function that  gives us  the  n-th smallest term for any finite sequence of real numbers with N terms.

In order to introduce the notation, if $\{x_{k}\}_{k=1}^{N} $ is a finite sequence of real numbers with $N$ terms, then we denote by  $\{x^{(j)}_{k}\}_{k=1}^{N-1} $   the subsequence of $\{ x_{k}\}_{k=1}^{N}$ obtening by elimination of  $j$-th term of  $\{x_{k}\}_{k=1}^{N},$ that is 
\[ x^{(j)}_{k} = \left\{ \begin{array}{ll}
x_{k}, & \text{if  $ k < j$} \\
x_{k+1}, & \text{if  $k \geq j$ and $k<N$} \end{array} \right. \]

In general,
\[  x^{(j, t)}_{k} = (x_{k}^{(j)})^{(t)} \]
and
\[ x^{(j_{1}, j_{2}, \cdots, j_{t})}_{k} = \left(x_{k}^{(j_{1}, \cdots, j_{t-1})}\right)^{(j_{t})}. \]

\begin{proposition} Let  $\{x_{k}\}_{k=1}^{N} $ be a finite sequence of real numbers with $N$ terms. Let $\sigma $ be a permutation on $\{1, 2, \cdots, N\}$ with
\[ x_{1^{\sigma}} \leq x_{2^{\sigma}} \leq \cdots \leq x_{N^{\sigma}}. \] If $n$ is a positive integer such that $2 \leq n \leq N, $ then there is $j \in \{1, 2, \ldots, N - n + 2\}$ such that $x_{n^{\sigma}} \in \{ x_{k}^{(j)}\}_{k=1}^{N-1}. $ 
\end{proposition}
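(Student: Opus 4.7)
The plan is to exhibit an explicit index $j$ in the range $\{1,2,\ldots,N-n+2\}$ with $j \neq n^{\sigma}$, and then check directly from the definition of the deleted subsequence that the value $x_{n^{\sigma}}$ survives in $\{x_{k}^{(j)}\}_{k=1}^{N-1}$. The core combinatorial observation is a counting one: the candidate set $\{1,2,\ldots,N-n+2\}$ has cardinality $N-n+2$, and the hypothesis $n \leq N$ forces $N-n+2 \geq 2$. Hence this set cannot be exhausted by the single index $n^{\sigma}$, so some $j$ in the set differs from $n^{\sigma}$.

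First, I would set up two cases. If $n^{\sigma} \notin \{1,\ldots,N-n+2\}$, then any element of this set is a valid choice for $j$, e.g.\ $j=1$, and automatically $n^{\sigma} > N-n+2 \geq j$. If instead $n^{\sigma} \in \{1,\ldots,N-n+2\}$, then the complement $\{1,\ldots,N-n+2\}\setminus\{n^{\sigma}\}$ is non-empty (this is where the size bound $N-n+2 \geq 2$ enters), and I choose $j$ from that complement.

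Second, with $j \neq n^{\sigma}$ fixed, I would unpack the defining formula of $x_{k}^{(j)}$ to locate the value $x_{n^{\sigma}}$ inside the shorter sequence. If $n^{\sigma} < j$, then by definition $x_{n^{\sigma}}^{(j)} = x_{n^{\sigma}}$, so the value appears at position $n^{\sigma}$. If $n^{\sigma} > j$, then $n^{\sigma} - 1 \geq j$ (as these are integers), so the definition gives $x_{n^{\sigma}-1}^{(j)} = x_{(n^{\sigma}-1)+1} = x_{n^{\sigma}}$, and the value appears at position $n^{\sigma}-1$. Either way $x_{n^{\sigma}} \in \{x_{k}^{(j)}\}_{k=1}^{N-1}$, which is the claim.

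The proof is essentially a one-line pigeonhole argument dressed up in the notation introduced at the beginning of the paper; the only place where care is needed is the index-shift bookkeeping in the second case (checking that $n^{\sigma}-1 \geq j$ so that the ``$k \geq j$'' branch of the definition of $x_{k}^{(j)}$ is applicable). The bound $N-n+2$ itself plays no deeper role than guaranteeing the candidate set has at least two elements, so that avoiding the single position $n^{\sigma}$ is always possible.
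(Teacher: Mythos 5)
Your argument is correct for the statement as literally written: since $n \leq N$ forces $N-n+2 \geq 2$, you can always pick $j \neq n^{\sigma}$ in the candidate range, and your index bookkeeping showing that the value $x_{n^{\sigma}}$ survives at position $n^{\sigma}$ (if $n^{\sigma}<j$) or $n^{\sigma}-1$ (if $n^{\sigma}>j$) of the deleted sequence is accurate. However, it is a genuinely different argument from the paper's, and the difference is not cosmetic. The paper intersects $A=\{1,\ldots,N-n+2\}$ with $B=\{1^{\sigma},\ldots,(n-1)^{\sigma}\}$, the set of \emph{positions of the $n-1$ smallest terms}, and uses $|A|+|B|=(N-n+2)+(n-1)=N+1>N\geq |A\cup B|$ to produce a $j$ lying in both sets. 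Such a $j$ does more than keep the value $x_{n^{\sigma}}$ in the deleted sequence: because the deleted term is one of the $n-1$ smallest, $x_{n^{\sigma}}$ becomes the $(n-1)$-th smallest term of $\{x_k^{(j)}\}_{k=1}^{N-1}$, and that is exactly what the proof of the Theorem invokes (its induction hypothesis gives $T(n-1,\{x_k^{(j)}\}_{k=1}^{N-1})=x_{n^{\sigma}}$ precisely when $j\in\{1^{\sigma},\ldots,(n-1)^{\sigma}\}$). Your choice of $j$ only guarantees membership, which is nearly trivial and would not feed that induction: for a $j\neq n^{\sigma}$ lying outside $B$, the $(n-1)$-th smallest term of the deleted sequence is $x_{(n-1)^{\sigma}}$, not $x_{n^{\sigma}}$. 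So read as the weak membership claim it literally asserts, the proposition admits your shorter pigeonhole proof; read as the lemma the paper actually uses downstream, the essential idea — counting against the $n-1$ positions $1^{\sigma},\ldots,(n-1)^{\sigma}$ rather than against the single position $n^{\sigma}$ — is absent from your write-up.
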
 

\begin{proof} In fact, if  $A = \{1, \cdots, N - n + 2\}$  and 
$B = \{1^{\sigma}, 2^{\sigma}, \ldots, (n-1)^{\sigma}\} , $ by Inclusion-Exclusion Principle, we have $|A \cap B| = |A| + |B| - |A \cup B| =  (N- n + 2) + (n - 1) - |A \cup B| =  N + 1 - |A \cup B| \geq  N + 1 - N = 1.$  Consequently, $A \cup B \neq \varnothing $ and  there is   $ j \in A \cap B$ such that  $x_{n^{\sigma}} \in \{x_{k}^{(j)}\}_{k=1}^{N-1}.$ 
\end{proof}

\begin{rem} The function that affords us  the greatest element in a finite sequence of real numbers is given by following recursive formula:
\[ \max\{ x_{1}, x_{2}\} = \frac{x_{1} + x_{2} + |x_{1} - x_{2}|}{2} \]
\[ \max\{ x_{1}, x_{2}, \cdots, x_{N}\}  = \max\{ \max\{ x_{1}, x_{2}, \cdots, x_{N-1}\} , x_{N}\}   \] 

This function can be changed to calculate  the smallest element  in a finite sequence of real numbers by the following recursive formula
\[ \min\{ x_{1}, x_{2}\}  = -\max\{-x_{1}, -x_{2}\}  = \frac{x_{1} + x_{2} - |x_{1} - x_{2}|}{2} \]
\[ \min\{x_{1}, \cdots, x_{n}\}  = \min\{\min\{x_{1},\cdots, x_{N-1}\}, x_{N}\} \] 

These functions was used to demonstrate the {\em Stone-Weierstrass Theorem}  in   \cite{josa,josb,josc}.

\end{rem}

\begin{example} For three and four terms, we have
\[  \begin{array}{ll} &  \min\{x_{1}, x_{2}, x_{3}\} \\
= &  \min\{\min\{x_{1}, x_{2}\}, x_{3}\} \\ \\
= &  \displaystyle \frac{\min\{x_{1}, x_{2}\} + x_{3} - \mid \min\{x_{1}, x_{2}\} - x_{3} \mid }{2} \\ \\
= & \displaystyle  \frac{ \frac{x_{1} + x_{2} - \mid x_{1} - x_{2} \mid }{2} + x_{3} - \left|  \frac{x_{1} + x_{2} - \mid x_{1} - x_{2} \mid }{2}  - x_{3} \right|}{2} \\ \\
= &\displaystyle  \frac{x_{1} + x_{2} + 2x_{3} - \mid x_{1} - x_{2} \mid   - \left|  x_{1} + x_{2} - 2x_{3} - \mid x_{1} - x_{2} \mid   \right|}{4} 
\end{array}   \]

\[  \begin{array}{ll} &  \min\{x_{1}, x_{2}, x_{3}, x_{4}\} \\
= &  \min\{\min(x_{1}, x_{2}, x_{3}\}, x_{4}\}  \\ \\
= &  \displaystyle \frac{ \frac{x_{1} + x_{2} + 2x_{3} - \mid x_{1} - x_{2} \mid   - \left|  x_{1} + x_{2} - 2x_{3} - \mid x_{1} - x_{2} \mid   \right|}{4} + x_{4} - \left|  \frac{x_{1} + x_{2} + 2x_{3} - \mid x_{1} - x_{2} \mid   - \left|  x_{1} + x_{2} - 2x_{3} - \mid x_{1} - x_{2} \mid   \right|}{4}  - x_{4} \right|}{2}  \\ \\
= &  \frac{ x_{1} + x_{2} + 2x_{3} + 4x_{4}- \mid x_{1} - x_{2} \mid   - \left|  x_{1} + x_{2} - 2x_{3} - \mid x_{1} - x_{2} \mid   \right| - \left|  x_{1} + x_{2} + 2x_{3} - 4x_{4} - \mid x_{1} - x_{2} \mid   - \left|  x_{1} + x_{2} - 2x_{3} - \mid x_{1} - x_{2} \mid   \right|  \right|}{8}  \\
\end{array}   \]
\end{example}

\vspace{1cm}

\begin{theorem} Let $\{x_{k}\}_{k=1}^{N}$ be a finite sequence of real numbers with $N$ terms and let a positive integer $ n $ such that $ n \leq N, $ then
\[ T(n, \{x_{k}\}_{k=1}^{N}) = \left\{ \begin{array}{ll} \displaystyle   \max \bigcup_{j=1}^{N-n+2} \left\{ T\left(n - 1, \{x_{k}^{(j)}\}_{k=1}^{N-1}\right) \right\} , & \text{ if $n \geq 2$} \\
\min\{x_{1}, \cdots, x_{N} \}, & \text{ if $n = 1$} \end{array}\right. \]
satisfies $T(n, \{x_{k}\}_{k=1}^{N}) = x_{n^{\sigma}}, $ where $\sigma $ is a permutation on $\{1, 2, \cdots, N\} $ such that $x_{1^{\sigma}} \leq x_{2^{\sigma}} \leq \cdots \leq x_{N^{\sigma}}. $
\end{theorem}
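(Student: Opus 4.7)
The natural approach is induction on $n$, with $N \geq n$ arbitrary at each step. The base case $n = 1$ is immediate: by definition $T(1, \{x_k\}_{k=1}^N) = \min\{x_1, \ldots, x_N\}$, which is the smallest value, namely $x_{1^{\sigma}}$.

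For the inductive step, fix $n$ with $2 \leq n \leq N$, a sequence $\{x_k\}_{k=1}^N$, and a sorting permutation $\sigma$. For each $j \in \{1, \ldots, N-n+2\}$, choose a permutation $\tau_j$ sorting $\{x_k^{(j)}\}_{k=1}^{N-1}$; the induction hypothesis applied at rank $n-1$ to this $(N-1)$-term subsequence gives
\[ T\bigl(n-1, \{x_k^{(j)}\}_{k=1}^{N-1}\bigr) = x^{(j)}_{(n-1)^{\tau_j}}, \]
i.e., the $(n-1)$-th smallest value of the subsequence obtained by deleting $x_j$ from the original.

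The key step is then to analyze this value by splitting on where $x_j$ falls in the original sorted order. Put $A = \{1, \ldots, N-n+2\}$ and $B = \{1^{\sigma}, 2^{\sigma}, \ldots, (n-1)^{\sigma}\}$. If $j \in B$, then $x_j$ is one of the $n-1$ smallest terms of $\{x_k\}_{k=1}^N$, so its removal promotes the term of sorted rank $n$ into rank $n-1$, and the value above equals $x_{n^{\sigma}}$. If $j \notin B$, then $x_j$ has sorted rank $\geq n$, the $n-1$ smallest are preserved under deletion, and the value equals $x_{(n-1)^{\sigma}}$. Since $x_{(n-1)^{\sigma}} \leq x_{n^{\sigma}}$, the maximum in the recursive definition is bounded above by $x_{n^{\sigma}}$, with equality attained precisely when $A \cap B \neq \varnothing$. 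The preceding Proposition establishes exactly this non-emptiness, so $T(n, \{x_k\}_{k=1}^N) = x_{n^{\sigma}}$, closing the induction.

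The main technical obstacle is pinning down the case analysis above, particularly when the sequence has repeated values: since $\sigma$ and the $\tau_j$ are not uniquely determined, one must check that $x_{n^{\sigma}}$ and $x_{(n-1)^{\sigma}}$ denote values (not merely indices) that are independent of the choice of sorting permutation, and that the dichotomy $j \in B$ versus $j \notin B$ captures the correct behaviour on the multiset of values. Once this bookkeeping is settled, the Proposition packages the remaining combinatorics into a single clean application, and the recursion unwinds without further difficulty.
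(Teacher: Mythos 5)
Your proposal is correct and follows essentially the same route as the paper: induction with the case split on whether $j \in \{1^{\sigma}, \ldots, (n-1)^{\sigma}\}$ (which the paper builds directly into its stated induction hypothesis, while you derive it from the cleaner hypothesis that $T(n-1,\cdot)$ returns the $(n-1)$-th smallest value of the deleted subsequence), followed by the Proposition to guarantee the value $x_{n^{\sigma}}$ is attained in the maximum. Your explicit attention to repeated values and to working with values rather than indices is a point the paper glosses over, but it does not change the structure of the argument.
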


\begin{proof} If $n = 1,$ then $T(1, \{x_{n}\}_{k=1}^{N}) = \min\{ x_{1}, \cdots, x_{n}\} = x_{1^{\sigma}}.$
If $N = 1, $ then $T(1, \{x_{n}\}_{k=1}^{1}) = x_{1} = x_{1^{\sigma}}. $ 

If $N = 2, $ then $T(1, \{x_{n}\}_{k=1}^{2}) = x_{1^{\sigma}}$ and $T(2, \{x_{n}\}_{k=1}^{2}) = \max\{ x_{2}, x_{1}\} = x_{2^{\sigma}}. $

Suppose, by induction on $n$ and $N$ that 
\[ T(n - 1, \{x_{k}^{(j)}\}_{k=1}^{N-1} ) = \left\{ \begin{array}{ll} x_{(n-1)^{\sigma}}, & \text{ if $j \not\in \{ 1^{\sigma}, 2^{\sigma}, \cdots, (n-1)^{\sigma}\}$ } \\
 x_{(n)^{\sigma}}, & \text{ if $j \in \{ 1^{\sigma}, 2^{\sigma}, \cdots, (n-1)^{\sigma}\}$ }, \end{array} \right. \]

As 
\[ T(n, \{x_{k}\}_{k=1}^{N}) = \max \bigcup_{j=1}^{N-n+2} \{ T(n-1, \{x_{k}^{(j)}\}_{k=1}^{N-1} ) \}, \] by Proposition 3, we have
\[ T(n, \{x_{k}\}_{k=1}^{N}) =  \max \bigcup_{j=1}^{N-n+2} \{ T(n-1, \{x_{k}^{(j)}\}_{k=1}^{N-1} ) \} = \max\{ x_{(n-1)^{\sigma}}, x_{n^{\sigma}}\} = x_{n^{\sigma}}. \]
\end{proof}

In Statistics, we can use the following Corollary to calculate the Mediane for any finite sequence of real numbers:

\begin{corollary}

The formula above affords us  to calculate the Mediane  $Md$ for any finite sequence  $\{x_{n}\}_{k=1}^{N}$ of real numbers  with $N$ terms:

\[ Md = \left\{ \begin{array}{ll}
T\left(\frac{N+1}{2}, \{x_{n}\}_{k=1}^{N}\right), & \text{if $n$  is  odd} \\ \\
\frac{T\left(\frac{N}{2}, \{x_{n}\}_{k=1}^{N}\right) + T\left(\frac{N}{2} + 1, \{x_{n}\}_{k=1}^{N}\right)}{2}, & \text{if  $n $ is even} \end{array}\right.  \]

\end{corollary}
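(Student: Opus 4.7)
The plan is to reduce the corollary to the preceding Theorem combined with the standard statistical definition of the median. I first note a (harmless) notational slip in the stated case split, which reads ``if $n$ is odd/even'' where it should read ``if $N$ is odd/even''; I will proceed on the latter reading.

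The first step is to recall the definition: for a sample of $N$ real numbers with sorted values $x_{1^{\sigma}} \leq x_{2^{\sigma}} \leq \cdots \leq x_{N^{\sigma}}$, the median is by definition the middle order statistic $x_{((N+1)/2)^{\sigma}}$ when $N$ is odd, and the arithmetic mean $\tfrac{1}{2}\bigl(x_{(N/2)^{\sigma}} + x_{((N/2)+1)^{\sigma}}\bigr)$ of the two central order statistics when $N$ is even. The second step is a brief admissibility check: $(N+1)/2$ is a positive integer in $\{1,\ldots,N\}$ when $N$ is odd, and both $N/2$ and $N/2+1$ lie in $\{1,\ldots,N\}$ when $N \geq 2$ is even.

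The third step is to apply the preceding Theorem, which supplies the identity $T(n,\{x_{k}\}_{k=1}^{N}) = x_{n^{\sigma}}$ for every $n$ with $1 \leq n \leq N$. Substituting $n = (N+1)/2$ in the odd case, and $n = N/2$ and $n = N/2+1$ in the even case, converts the definitional expression recalled in step one into precisely the two branches of the piecewise formula for $Md$ claimed in the corollary.

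There is essentially no mathematical obstacle: the corollary is a direct specialization of the Theorem once the textbook definition of the median is inserted. The only point I would flag explicitly for the reader is the $n$-versus-$N$ typo in the case labels, since it is the sample size $N$, not the rank $n$, that governs the two cases.
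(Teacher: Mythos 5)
Your proof is correct and is exactly the intended argument: the paper itself gives no proof of this corollary, treating it as an immediate specialization of the Theorem ($T(n,\{x_k\}_{k=1}^N)=x_{n^\sigma}$) at $n=\frac{N+1}{2}$, respectively $n=\frac{N}{2}$ and $n=\frac{N}{2}+1$, combined with the textbook definition of the median. Your observation that the case labels should read ``$N$ odd/even'' rather than ``$n$ odd/even'' is also a correct catch of a typo in the statement.
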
 

\begin{example}  The following formulas affords us the n-th  smallest term in a finite sequence of real numbers for $N \in \{1, 2, 3, 4\}$:

For $N = 1:$ 
\[ T(1, \{x_{k}\}_{k=1}^{1}) = x_{1} \]

For $N = 2:$ 
\[ T(1, \{x_{k}\}_{k=1}^{2}) = \min\{ x_{1}, x_{2}\} \]
\[ T(2, \{x_{k}\}_{k=1}^{2}) = \max\{ x_{2}, x_{1}\} \]

For $N = 3:$ 

\[ T(1, \{x_{k}\}_{k=1}^{3}) = \min\{ x_{1}, x_{2}, x_{3}\} \]
\[ T(2, \{x_{k}\}_{k=1}^{3}) = \max\left\{ \min\{ x_{2}, x_{3}\}, \min\{ x_{1}, x_{3}\}, \min\{ x_{1}, x_{2}\} \right\} \]
\[ T(3, \{x_{k}\}_{k=1}^{3}) = \max\left\{ \max\{ x_{3}, x_{2}\}, \max\{ x_{3}, x_{1}\} \right\} \]

For $N = 4:$ 

\[ T(1, \{x_{k}\}_{k=1}^{4}) = \min\{ x_{1}, x_{2}, x_{3}, x_{4}\}  \]
\[ T(2, \{x_{k}\}_{k=1}^{4}) = \max\left\{ \min\{ x_{2}, x_{3}, x_{4}\}, \min\{ x_{1}, x_{3}, x_{4}\}, \min\{ x_{1}, x_{2}, x_{3}\} \right\} \]
\[ T(3, \{x_{k}\}_{k=1}^{4}) = \max\left\{ \begin{array}{l}  \max\{ \min\{x_{3}, x_{4}\}, \min\{ x_{2}, x_{4}\}, \min\{x_{2}, x_{3}\} \}, \\
 \max\{ \min\{x_{3}, x_{4}\}, \min\{x_{1}, x_{4}\}, \min\{x_{1}, x_{3}\}\}, \\
\max\{ \min\{x_{2}, x_{4}\}, \min\{x_{1}, x_{4}\}, \min\{x_{1}, x_{2}\} \} \end{array}  \right\} \]
\[ T(4, \{x_{k}\}_{k=1}^{4}) = \max\left\{ \max\{\max\{x_{4}, x_{3}\}, \max\{x_{4}, x_{2}\} \} ,\max\{ \max\{ x_{4}, x_{3}\}, \max\{x_{4}, x_{1}\}\} \right\}  \]
\end{example}
%%7) BIBLIOGRAFIA.

\bigskip
\bigskip

{\footnotesize \pn{\bf Josimar da Silva Rocha}\; \\ {Coordination of
Mathematics (COMAT)}, {Universidade Tecnol\'ogica Federal do Paran\'a, 86300-000,} {Corn\'elio Proc\'opio-PR, Brazil}\\
{\tt Email: jsrocha74@gmail.com}\\

\end{document}